\pgfplotsset{compat=1.15}
\let\pa=\partial
\let\al=\alpha
\let\g=\gamma
\let\d=\delta
\let\lam=\lambda
\let\s=\sigma
\let\t=\theta
\let\f=\frac
\let\D=\Delta
\let\Om=\Omega
\let\e=\varepsilon
\let\pa=\partial
\let\ri=\rightarrow
\let\na=\nabla
\newcommand{\beq}{\begin{equation}}
\newcommand{\eeq}{\end{equation}}
\newcommand{\beqo}{\begin{equation*}}
\newcommand{\eeqo}{\end{equation*}}
\newcommand{\ben}{\begin{eqnarray}}
\newcommand{\een}{\end{eqnarray}}
\newcommand{\beno}{\begin{eqnarray*}}
\newcommand{\eeno}{\end{eqnarray*}}
\newcommand{\dist}{\mathrm{dist}}
\newcommand{\BR}{\mathbb{R}}
\newcommand{\cl}{\mathcal{L}^1}
\newtheorem{theorem}{Theorem}[section]
\newtheorem{definition}[theorem]{Definition}
\newtheorem{lemma}[theorem]{Lemma}
\newtheorem{proposition}[theorem]{Proposition}
\theoremstyle{remark}
\newtheorem{case}{Case}
\newtheorem{rmk}{Remark}[section]
\newcommand{\mres}{\mathbin{\vrule height 1.6ex depth 0pt width
0.13ex\vrule height 0.13ex depth 0pt width 1.2ex}}
\begin{document}
\title[Triple junction problem]{On the triple junction problem with general surface tension coefficients}
\author{Nicholas D. Alikakos$^1$}
\address{$^1$Department of Mathematics, University of Athens (EKPA), Panepistemiopolis, 15784 Athens, Greece}
\email{nalikako@math.uoa.gr}

\author{Zhiyuan Geng$^2$}
\address{$^2$Basque Center for Applied Mathematics, Alameda de Mazarredo 14
48009 Bilbao, Bizkaia, Spain}
\email{zgeng@bcamath.org}

\date{\today}

\begin{abstract}
    We investigate the Allen-Cahn system
    \begin{equation*}
    \Delta u-W_u(u)=0,\quad u:\mathbb{R}^2\rightarrow\mathbb{R}^2,
    \end{equation*}
    where $W\in C^2(\mathbb{R}^2,[0,+\infty))$ is a potential with three global minima. We establish the existence of an entire solution $u$ which possesses a triple junction structure. The main strategy is to study the global minimizer $u_\varepsilon$ of the variational problem 
    \begin{equation*}
    \min\int_{B_1} \left( \f{\varepsilon}{2}|\nabla u|^2+\f{1}{\varepsilon}W(u) \right)\,dz,\ \ u=g_\varepsilon \text{ on }\partial B_1.
    \end{equation*}
    The point of departure is an energy lower bound that plays a crucial role in estimating the location and size of the diffuse interface. We do not impose any symmetry hypotheses on the solution or on the potential. 
\end{abstract}

\keywords{triple junction, Allen-Cahn system, energy lower bound, diffuse interface, general surface tension}

\maketitle

\section{Introduction}

We study the existence of an entire, bounded, minimizing solution to the system 
\beq\label{main equation}
\D u-W_u(u)=0, \quad u:\BR^2\ri\BR^2,
\eeq
where $W$ is a triple-well potential with three global minima. For the potential $W$ we assume
\vspace{3mm}
\begin{enumerate}
    \item[(H1).]  $W\in C^2(\BR^2;[0,+\infty))$, $\{z: \,W(z)=0\}=\{a_1,a_2,a_3\}$, $W_u(u)\cdot u >0$ if $|u|>M$ and 
    \beqo
     c_2|\xi^2| \geq \xi^TW_{uu}(a_i)\xi\geq  c_1|\xi|^2,\; i=1,2,3.
    \eeqo 
    for some positive constants $c_1<c_2$ depending on $W$. 
    \item[(H2).] For $i\neq j$, $i,j\in \{1,2,3\}$, let $U_{ij}\in W^{1,2}(\BR,\BR^2)$ be the 1D minimizer with action
\beqo
\sigma_{ij}:=\min \int_{\BR}\left(\f12|U_{ij}'|^2+W(U_{ij})\right)\,d\eta, \quad \lim\limits_{\eta\ri-\infty}U_{ij}(\eta)=a_i,\ \lim\limits_{\eta\ri+\infty}U_{ij}(\eta)=a_j.
\eeqo
We assume that the \emph{surface tension coefficients} $\sigma_{ij}$ satisfy
\beq\label{cond on sigma}
\sigma_{ij}<\sigma_{ik}+\sigma_{jk}\quad \text{ for }i\neq j,k\text{ with }i,j,k\in\{1,2,3\}.
\eeq
\end{enumerate}
We refer the reader to \cite[Proposition 2.6 \& Proposition 2.12]{afs-book} for the sufficiency and necessity of \eqref{cond on sigma} for the existence of all three $U_{ij}$ above.

Note that \eqref{main equation} is the Euler-Lagrange equation corresponding  to 
\begin{equation}\label{energy functional}
J(u,\Omega):=\int_\Om \left( \f12|\na u|^2+W(u) \right)\,dz,\ \ \forall \text{ bounded open set } \Om\subset \BR^2. 
\end{equation}

 Our definition of the \emph{minimizing solution} is as follows.
\begin{definition}
$u$ is a \emph{minimizing solution} of \eqref{main equation} in the sense of De Giorgi if 
\begin{equation}
    J(u,\Om)\leq J(u+v,\Om), \quad \forall \text{ bounded open set } \Om\subset \BR^2, \ \forall  v\in C_0^1(\Om).
\end{equation}
\end{definition}

The solution we study here is closely related to the minimal partition problem on the plane. More precisely, we define
\beqo
\mathcal{P}=\{D_1,D_2,D_3\}, 
\eeqo
which is a partition of the plane into three sectors with angles $\al_1,\,\al_2,\,\al_3$ respectively. We call $\pa \mathcal{P}=\pa D_1\cup\pa D_2\cup\pa D_3$ a \emph{triod} and its center a \emph{triple junction}. Moreover, these angles satisfy Young's law, that is
\begin{align}
\label{sum of angles} &\sum_{i=1}^3\al_i=2\pi,\quad \theta_i\in (0,\pi) \text{ for every }i\in\{1,2,3\},\\
\label{relation of angles} & \qquad \f{\sin{\al_{1}}}{\s_{23}}=\f{\sin{\al_2}}{\s_{13}}=\f{\sin{\al_3}}{\s_{12}}.
\end{align}
It is well known that $\mathcal{P}$ is a (complete) minimizing partition of $\BR^2$ into three phases and $\pa\mathcal{P}$ is a minimal cone. $\mathcal{P}$ is minimizing in the sense that for any $\Omega\subset \BR^2$, $\mathcal{P}\mres \Omega=\{D_i\cap\Om\}_{i=1}^3 $ is a solution of the following variational problem 
\beqo
\begin{split}
&\qquad\qquad\quad \min \sum\limits_{i<j} \s_{ij} \mathcal{H}^1(\pa (A_i\cap \Om) \cap \pa(A_j\cap\Om)),\\
&\mathcal{A}=\{A_i\}_{i=1}^3 \text{ is a 3--partition of }\mathbb{R}^2 \text{ and } \mathcal{P}\mres(\mathbb{R}^2\setminus \Omega)= \mathcal{A}\mres(\mathbb{R}^2\setminus \Omega). 
\end{split}
\eeqo
For the significance of the triangle inequality \eqref{cond on sigma} for the minimal partitioning problem, we refer to \cite{white1996existence}.

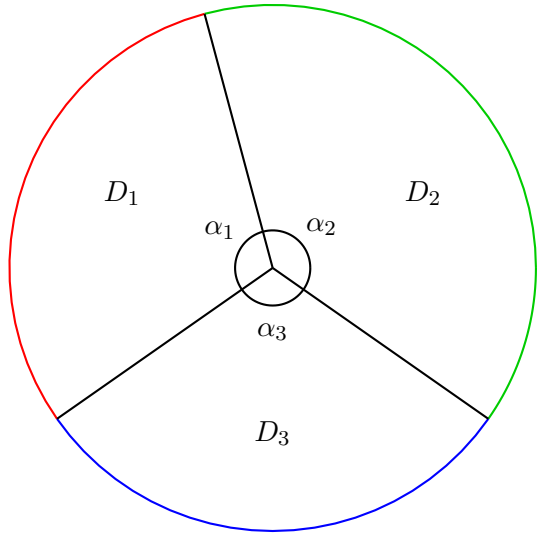
\begin{figure}[ht]
\begin{tikzpicture}[thick]
\draw [black!20!green,domain=0:105] plot ({3.5*cos(\x)}, {3.5*sin(\x)}) node at (2,1)[black]{$D_2$};
\draw [domain=0:105] plot ({0.5*cos(\x)}, {0.5*sin(\x)}) node at (0.65,0.55) {$\al_2$};
\draw [domain=105:215] plot ({0.5*cos(\x)}, {0.5*sin(\x)}) node at (-0.7,0.5) {$\al_1$};
\draw [domain=215:325] plot ({0.5*cos(\x)}, {0.5*sin(\x)}) node at (0,-0.85) {$\al_3$};
\draw [domain=325:360] plot ({0.5*cos(\x)}, {0.5*sin(\x)});
\draw [black!20!green,domain=325:360] plot ({3.5*cos(\x)}, {3.5*sin(\x)});
\draw [red,domain=105:215] plot ({3.5*cos(\x)}, {3.5*sin(\x)}) node at (-2,1)[black]{$D_1$};
\draw [blue,domain=215:325] plot ({3.5*cos(\x)}, {3.5*sin(\x)}) node at (0,-2.2)[black]{$D_3$};
\draw  ({3.5*cos(105)},{3.5*sin(105)})--(0,0);
\draw ({3.5*cos(215)},{3.5*sin(215)})--(0,0);
\draw ({3.5*cos(325)},{3.5*sin(325)})--(0,0);
\end{tikzpicture}
\caption{The partition $\mathcal{P}=\{D_1,D_2,D_3\}$ with opening angles $\al_1,\al_2,\al_3$ satisfying \eqref{sum of angles} \& \eqref{relation of angles}, and $\Omega$ a ball centered at the junction}
\label{pic2}
\end{figure}

Now we state our main theorem. 
\begin{theorem}\label{main theorem}
Fix $\g< \min\{\g_0,\min\limits_{i,j\in\{1,2,3\}} \f12 \dist(a_i,a_j), \sqrt{\f{\sigma}{20C_W}}\}$. There is a constant $C_0$ which depends on $\g,W$ such that under the hypotheses (H1), (H2),  there exists an entire, bounded minimizing solution of \eqref{main equation} with the following triple junction structure. 
\begin{enumerate}
    \item[a.] For every $r>0$, there exists a point $P(r)$ such that $|u(P(r))-a_1|\leq \g$.
    \item[b.] There exists $\{Q_j\}_{j=1}^\infty\cup \{R_j\}_{j=1}^\infty$ such that 
    \beqo
    \begin{split}
    &\vert u(Q_j)-a_2\vert\leq \g,\quad \vert u(R_j)-a_3\vert\leq \g,\\
    &\dist(Q_j, 0)\leq 32jC_0,\quad \dist(R_j, 0)\leq 32jC_0.
    \end{split}
    \eeqo
    The pairwise distance of any two points from $\{Q_j\}_{j=1}^\infty\cup \{R_j\}_{j=1}^\infty$ is larger than or equal to $6C_0$.
    \item[c.] For each $Q_j$, there exists $P_j$ such that 
    \beqo
    \dist(Q_j,P_j)\leq C_0,\quad |u(P_j)-a_1|\leq \g;
    \eeqo
    This property also holds for $\{R_j\}$.
    \item[d.]  For any sequence $r_k\ri+\infty$ one can extract a subsequence, still denoted by $\{r_k\}$,  such that 
\beq\label{convergence in thm 1.3}
u(r_k x)\rightarrow u_0(x)\ \text{ in }L^1_{loc}(\BR^2),
\eeq
where $u_0(x)=\sum\limits_{i=1}^3a_i \mathcal{\chi}_{D_i}$. Here $\chi$ is the characteristic function. $\mathcal{P}=\{D_1,D_2,D_3\}$ provides a minimal partition of $\BR^2$ into three sectors with angles $\al_i\;(i=1,2,3)$ and $\pa \mathcal{P}$ is a triod centered at $0$. The partition $\mathcal{P}$ may depend on $\{r_k\}$. Also there exists a sequence $\{r'_k\}_{k=1}^\infty$, such that for each $\xi\in D_j\, (j=1,2,3)$, $|\xi|=1$, 
\beq
\lim\limits_{k\ri\infty}\f{1}{r_k'}\int_0^{r_k'} u(s\xi)\,ds=a_j,   \tag{\ref*{convergence in thm 1.3}'} \label{convergence in thm 1.3 along a rays}
\eeq
and the convergence is uniform for $\xi$ in compact sets of $\mathbb{S}^1\setminus \pa\mathcal{P}$.
\end{enumerate}
\end{theorem}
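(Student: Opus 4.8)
The plan is to obtain the entire solution as a limit of minimizers $u_\varepsilon$ of the rescaled functional $J_\varepsilon(u,B_1)=\int_{B_1}\big(\frac{\varepsilon}{2}|\nabla u|^2+\frac1\varepsilon W(u)\big)\,dz$ subject to a carefully chosen boundary datum $g_\varepsilon$ on $\partial B_1$. First I would fix a limiting partition $\mathcal P$ obeying Young's law \eqref{relation of angles} and take $g_\varepsilon$ to be a profile that equals $a_i$ on the arc $\partial B_1\cap D_i$ away from the three triple-junction rays and interpolates through the one-dimensional connections $U_{ij}$ across each of the three meeting points, on a scale $\varepsilon$. Existence of a minimizer $u_\varepsilon$ is by the direct method (coercivity from (H1) and weak lower semicontinuity). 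The scaling $v_\varepsilon(z):=u_\varepsilon(\varepsilon z)$ turns $u_\varepsilon$ into a solution of the original system \eqref{main equation} on the growing ball $B_{1/\varepsilon}$, with $J_\varepsilon(u_\varepsilon,B_1)=\varepsilon\,J(v_\varepsilon,B_{1/\varepsilon})$; letting $\varepsilon\to0$ is what produces the entire solution. The constant $C_0$ in the statement is, up to a factor, the width in the unscaled variable of the diffuse interface, i.e.\ the distance one must travel transversally to pass from $\gamma$-proximity to one minimum to $\gamma$-proximity to another.

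Second, and this is where the real work lies, I would establish the \emph{energy lower bound}. Pairing the one-dimensional lower bound (a chord across $B_1$ on which $u_\varepsilon$ joins a neighborhood of $a_i$ to a neighborhood of $a_j$ contributes at least $\sigma_{ij}-o(1)$, by the very definition of $\sigma_{ij}$ as minimal action) and integrating over families of parallel chords, I would show
\[
J_\varepsilon(u_\varepsilon,B_1)\ \ge\ \sum_{i<j}\sigma_{ij}\,\mathcal H^1\big(\partial D_i\cap\partial D_j\cap B_1\big)-C,
\]
with $C=C(\gamma,W)$ bounding the triple-junction contribution. Matching this against the competitor upper bound forces, via a pigeonhole/clustering argument, that $u_\varepsilon$ is $\gamma$-close to exactly one $a_i$ on all but a thin tube around $\partial\mathcal P$, and that the tube has transversal width $O(\varepsilon)$ (equivalently $O(C_0)$ for $v_\varepsilon$). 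Crucially, the strict triangle inequality \eqref{cond on sigma} is what prevents a single $a_i$--$a_j$ interface from splitting into an $a_i$--$a_k$ and an $a_k$--$a_j$ pair, hence forbids the third phase from ``wetting'' the other interfaces, and keeps the junction from drifting to $\partial B_1$.

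Third, I would pass to the limit. The vectorial Modica--Mortola / Baldo $\Gamma$-convergence theory gives $u_\varepsilon\to u_0=\sum_i a_i\chi_{D_i}$ in $L^1(B_1)$ with $\mathcal P$ a minimizing three-partition, whence the triod and Young's law. On the unscaled side, the bound $W_u(u)\cdot u>0$ for $|u|>M$ yields a uniform $L^\infty$ bound on $v_\varepsilon$ by the maximum principle, and Schauder estimates upgrade this to uniform $C^{2,\alpha}_{loc}$ control; a diagonal extraction as $B_{1/\varepsilon}\uparrow\mathbb{R}^2$ produces an entire solution $u$ of \eqref{main equation}, whose De Giorgi minimality passes to the limit by lower semicontinuity together with the local comparison property of each $u_\varepsilon$. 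The assertions (a)--(c) then follow by reading the clustering and interface-width estimates for $v_\varepsilon$ in the unscaled frame: each sector carries points where $u$ is $\gamma$-close to its minimum (part (a) for the sector $D_1$); the transition tube of width $\sim C_0$ produces the paired points $P_j$ near $Q_j$ (part (c)); and the linear-in-$r$ growth of interface length forces the phases $a_2,a_3$ to recur, giving the sequences $\{Q_j\},\{R_j\}$ with $\dist(\cdot,0)\le32jC_0$ and mutual separation $\ge6C_0$ (part (b)).

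Finally, for the blow-down statement (d) I would rescale the entire solution, $u(r_k\cdot)$, and use the uniform energy bound $J(u,B_r)\le Cr$ together with $BV$ compactness to extract an $L^1_{loc}$ limit $u_0$; minimality is inherited, so $u_0$ is the characteristic function of a minimal partition into three sectors with triod centered at $0$ (the rotation of which may depend on $\{r_k\}$). The averaged ray convergence \eqref{convergence in thm 1.3 along a rays} comes from a one-dimensional slicing of this $L^1$ convergence, with uniformity on compact subsets of $\mathbb{S}^1\setminus\partial\mathcal P$ following from the interface localization. I expect the main obstacle to be the energy lower bound in the non-symmetric regime: without reflection or symmetrization one must quantify the triple-junction defect directly and show it is bounded by a constant depending only on $\gamma$ and $W$, uniformly as $\varepsilon\to0$. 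This is exactly what keeps the diffuse interface thin and the junction pinned, and everything else is comparatively standard.
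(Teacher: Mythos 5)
Your global architecture coincides with the paper's: minimizers $u_\varepsilon$ on $B_1$ with a triod-compatible boundary datum $g_\varepsilon$, an upper bound by an explicit competitor, a slicing lower bound, localization of the diffuse interface, blow-up $v_\varepsilon(z)=u_\varepsilon(\varepsilon z)$ with uniform elliptic estimates to produce the entire solution, and a blow-down analysis for part (d). But the pivotal quantitative step is stated too weakly, and as stated the argument fails. You propose the lower bound $J_\varepsilon(u_\varepsilon,B_1)\geq \sum_{i<j}\sigma_{ij}\,\mathcal{H}^1(\partial D_i\cap\partial D_j\cap B_1)-C$ with $C=C(\gamma,W)$ independent of $\varepsilon$, and you explicitly set ``defect bounded by a constant'' as the target; since each interface has $\mathcal{H}^1$-length $1$ in $B_1$, this reads $J_\varepsilon(u_\varepsilon,B_1)\geq \sigma_{12}+\sigma_{13}+\sigma_{23}-C$. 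Compared with the upper bound \eqref{upper bound}, an $O(1)$ defect leaves an $O(1)$ excess budget, which is compatible with the interface being displaced by $O(1)$ in the scaled frame, with spurious transition layers of total length $O(1)$, and with the junction drifting --- at the unscaled level, to infinity --- as $\varepsilon\to 0$. None of your pigeonhole deductions (thin tube around $\partial\mathcal P$, the recurrence of phases at distances $32jC_0$ with separation $6C_0$, the companion points $P_j$) can be extracted from such a bound, and the limit entire solution could degenerate to a one-dimensional heteroclinic. What the paper actually proves (Propositions \ref{prop: lower bound epsilon 1/3} and \ref{prop epsilon 1/2}) is a \emph{vanishing} defect, first $C\varepsilon^{1/3}$ and then $C\varepsilon^{1/2}$, matching \eqref{upper bound} up to $O(\varepsilon^{1/2})$; it is precisely this rate that yields the junction pinning $y^*\leq C\varepsilon^{1/4}$ in \eqref{loc of center} and the localization of the diffuse interface in a $C\varepsilon^{1/4}$-neighborhood of the triod, from which parts (a)--(c) follow with constants uniform in $\varepsilon$.

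The missing mechanism is the actual engine of Section \ref{sec:lower bound}: introduce the level $y^*$ at which $\lambda_1+\lambda_2$ nearly fills the horizontal chord, split $B_1$ into $\Omega_1$ (above) and $\Omega_2$ (below), write $W=\sin^2\theta_i\,W+\cos^2\theta_i\,W$, slice \emph{simultaneously} in the vertical and horizontal directions in each subdomain, optimize over $\theta_i$ (producing the Pythagorean combinations $\sqrt{a^2+b^2}$ rather than single-direction bounds), and reduce to the finite-dimensional problem \eqref{min problem}, whose strict minimum $\sigma_{12}+\sigma_{13}+\sigma_{23}$ at $\mu_1=\sin\frac{\alpha_2-\alpha_1}{2}$, $\mu_2=0$, $y^*=0$ encodes Young's law; Case 2 (Appendix \ref{app:case2 ineq}) separately rules out $y^*>\cos\frac{\alpha_2-\alpha_1}{2}-c_0\varepsilon$ by a strict inequality. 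Your single-direction chord integration ``with a constant absorbing the junction'' skips exactly this, and in the non-symmetric regime there is no reflection trick to fall back on. Two further points: your appeal to Baldo-type $\Gamma$-convergence gives only qualitative $L^1$ convergence at unit (scaled) length, i.e.\ at scale $1/\varepsilon$ unscaled, with no rate, so it cannot supply the uniform unscaled constants in (a)--(c); and your claimed transversal tube width $O(\varepsilon)$ is stronger than what is available near the junction --- the paper obtains (and needs only) $O(\varepsilon^{1/4})$. Finally, when running the interface-localization step you would also hit the subtlety the paper addresses in Section 4: the $\Omega_1$ lower bound must be achievable using a single directional derivative so that the remaining gradient component can be used for the improvement, which is why the paper rotates coordinates in $\Omega_1$ by the angle $\frac{\alpha_2-\alpha_1}{2}$.
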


In our  paper \cite{alikakos2022triple}, Theorem \ref{main theorem} was proved for the specific choice of $\sigma_{12}=\sigma_{13}=\sigma_{23}$. The objective of the current note is to extend this result to the general case where condition \eqref{cond on sigma} is satisfied. We will follow the proof in \cite{alikakos2022triple} step by step, with the main difference being the more complex calculation required to estimate the energy lower bound in Proposition \ref{prop: lower bound epsilon 1/3}. This calculation is presented in detail in Section \ref{sec:lower bound} and in the Appendices. For the rest of the proof, we will refer directly to \cite{alikakos2022triple} and only mention the necessary modifications.

\section{Preliminaries} \label{sec:preliminary}

Throughout the paper we denote by $z=(x,y)$ a 2D point and by $B_1=B_1(0)$ the unit 2D ball centered at the origin. We recall the following basic results.
\begin{lemma}[Lemma 2.1 in \cite{AF}]\label{lemma: potential energy estimate}
The hypotheses on $W$ imply the existence of $\delta_W>0$, and constants $c_W,C_W>0$ such that
\beqo
\begin{split}
&|u-a_i|=\delta\\
\Rightarrow & \ \f12 c_W\delta^2\leq W(u)\leq \f12 C_W \delta^2,\quad \forall \delta<\delta_W,\ i=1,2,3.
\end{split}
\eeqo
Moreover if $\min\limits_{i=1,2,3} |u-a_i|\geq \delta$ for some $\delta<\delta_W$, then $W(u)\geq \f12 c_W\delta^2.$
\end{lemma}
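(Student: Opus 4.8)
The plan is to derive the two-sided quadratic bound near each minimum $a_i$ from a second-order Taylor expansion, and then to obtain the uniform lower bound on the set $\{\min_i|u-a_i|\geq\delta\}$ by splitting it into a near-zone, where the local bound applies directly, and a far-zone, where I invoke compactness together with the coercivity hypothesis $W_u(u)\cdot u>0$ for $|u|>M$.

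For the local estimate, the first observation is that since each $a_i$ is an interior global minimum of the nonnegative $C^2$ function $W$, we have $W(a_i)=0$ and $\nabla W(a_i)=0$. The integral form of Taylor's theorem then gives, for every $u$,
\[
W(u)=\int_0^1 (1-t)\,(u-a_i)^{T} W_{uu}\big(a_i+t(u-a_i)\big)(u-a_i)\,dt.
\]
Using the spectral bounds $c_1|\xi|^2\leq \xi^{T}W_{uu}(a_i)\xi\leq c_2|\xi|^2$ from (H1) and the continuity of $W_{uu}$, I would fix $\delta_W>0$ small enough that the eigenvalues of $W_{uu}(v)$ all lie in a single interval $[c_W,C_W]$ with $0<c_W\leq c_1$ and $c_2\leq C_W$, for every $v$ in the $\delta_W$-neighbourhood of each $a_i$. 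Substituting this into the Taylor remainder and using $\int_0^1(1-t)\,dt=\frac12$ yields $\frac12 c_W|u-a_i|^2\leq W(u)\leq \frac12 C_W|u-a_i|^2$ whenever $|u-a_i|\leq \delta_W$, which in particular covers the stated case $|u-a_i|=\delta<\delta_W$.

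For the final assertion, suppose $\min_i|u-a_i|\geq\delta$ with $\delta<\delta_W$. If $u$ lies within $\delta_W$ of some $a_i$, then $\delta\leq|u-a_i|\leq\delta_W$, and the local lower bound already gives $W(u)\geq\frac12 c_W|u-a_i|^2\geq\frac12 c_W\delta^2$. It then remains to handle the far-zone $F=\{u:\min_i|u-a_i|\geq\delta_W\}$, on which I would show $\inf_F W=:m_0>0$. Fixing $R>\max\{M,\max_i|a_i|\}$, the set $F\cap\overline{B}_R$ is compact and $W$ is continuous and strictly positive there (its only zeros are the $a_i$), so $W\geq m_1>0$ on it; and for $|u|\geq R$, writing $g(s)=W(su/|u|)$ one has $g'(s)=\frac1s\,W_u(su/|u|)\cdot(su/|u|)>0$ for $s>M$, whence $W(u)\geq W(Ru/|u|)\geq \min_{|w|=R}W=:m_1'>0$. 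Taking $m_0=\min(m_1,m_1')$ proves the claim. Finally I would shrink $c_W$ if necessary so that $\frac12 c_W\delta_W^2\leq m_0$—which is harmless, as it only weakens the lower constant and leaves the upper bound (governed by $C_W$) untouched—so that on the far-zone $W(u)\geq m_0\geq\frac12 c_W\delta_W^2>\frac12 c_W\delta^2$.

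The only genuine subtlety, and the step I would treat most carefully, is reconciling a \emph{single} constant $c_W$ with the two distinct lower bounds: the local quadratic one from Taylor's theorem and the uniform positive one on the unbounded far-zone. This is exactly what the concluding shrinking step resolves. Everything else reduces to the Taylor expansion, continuity of $W_{uu}$, and a standard compactness-plus-coercivity argument.
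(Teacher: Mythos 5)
Your proof is correct. Note that the paper itself offers no argument for this lemma --- it is quoted verbatim from \cite{AF} --- so there is nothing internal to compare against; your Taylor-expansion-with-integral-remainder bound near each $a_i$, plus the compactness argument on $F\cap\overline{B}_R$ and the radial monotonicity $g'(s)=\frac1s\,W_u(su/|u|)\cdot(su/|u|)>0$ for $s>M$ drawn from the coercivity hypothesis in (H1), is exactly the standard route, and you correctly identified and resolved the one real subtlety, namely shrinking $c_W$ once at the end so that a single constant serves both the local quadratic lower bound and the uniform far-zone bound.
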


\begin{lemma}[Lemma 2.3 in \cite{AF}]\label{lemma: 1D energy estimate}
Take $i\neq j \in \{1,2,3\}$, $\d <\d_W$ and $s_+>s_+$ be two real numbers. Let $v:(s_-,s_+)\ri \BR^2$ be a smooth map that minimizes the energy functional 
\beqo
J_{(s_-,s_+)}(v):=\int_{s_-}^{s_+} \left(\f12|\na v|^2+W(v)\right)\,dx 
\eeqo
subject to the boundary condition 
\beqo
|v(s_-)-a_i|=|v(s_+)-a_j|=\delta.
\eeqo
Then
\beqo
J_{(s_-,s_+)}(v)\geq \sigma_{ij}-C_W\delta^2,
\eeqo
for some constant $C_W$ depending only on the potential $W$.
\end{lemma}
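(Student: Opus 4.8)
The plan is to reduce this finite-interval bound to the definition of $\sigma_{ij}$ on the whole line by extending $v$ to an admissible connection between the wells $a_i$ and $a_j$ and then controlling the cost of the extension. Concretely, I would first build a competitor $\tl v:\BR\ri\BR^2$ for the infinite-line problem by setting $\tl v=v$ on $(s_-,s_+)$ and appending two short ``caps'' on $(-\infty,s_-)$ and $(s_+,+\infty)$ that drive the traces $v(s_-)$ and $v(s_+)$ to $a_i$ and $a_j$, respectively. Since these caps are in $W^{1,2}$ and approach the wells, $\tl v\in W^{1,2}(\BR,\BR^2)$ with $\tl v(-\infty)=a_i$, $\tl v(+\infty)=a_j$, so $\tl v$ is admissible for the variational problem defining $\sigma_{ij}$ in (H2), whence $J_{(-\infty,+\infty)}(\tl v)\geq \sigma_{ij}$. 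Note that minimality of $v$ plays no role here: the asserted lower bound in fact holds for \emph{every} admissible $v$, since the extension is available for any competitor.

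Second, I would estimate the energy of each cap by $C_W\d^2$ (up to a constant depending only on $W$), which is the crux of the matter. For the left cap I connect $a_i$ to $v(s_-)$, which lies on the sphere $|u-a_i|=\d$ with $\d<\d_W$, along the straight segment $w=a_i+\r\,(v(s_-)-a_i)/\d$ with the radial profile $\r$ increasing from $0$ to $\d$; along this segment $|w-a_i|=\r\leq\d<\d_W$, so Lemma \ref{lemma: potential energy estimate} gives $W(w)\leq\f12 C_W\r^2$ throughout. Choosing $\r$ optimally — equivalently, imposing equipartition $\f12|w'|^2=W(w)$ on a half-line and invoking the Agmon-type bound $\int_0^\d\sqrt{2W}\,d\r\leq\sqrt{C_W}\,\d^2/2$ — yields a cap energy of order $C_W\d^2$, and the same construction at $a_j$ handles the right cap. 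The decisive feature is that the straight-line interpolation never leaves the ball where the quadratic upper bound on $W$ is valid, so the estimate is uniform in $s_\pm$ and independent of the (unspecified) directions in which $v(s_-)$ and $v(s_+)$ sit on their respective $\d$-spheres.

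Finally, I would combine the two steps. Additivity of the energy over the partition $\BR=(-\infty,s_-)\cup(s_-,s_+)\cup(s_+,+\infty)$ gives
\beqo
J_{(-\infty,+\infty)}(\tl v)=J_{(s_-,s_+)}(v)+J_{(-\infty,s_-)}(\tl v)+J_{(s_+,+\infty)}(\tl v)\leq J_{(s_-,s_+)}(v)+C_W\d^2,
\eeqo
and together with $J_{(-\infty,+\infty)}(\tl v)\geq\sigma_{ij}$ this rearranges to $J_{(s_-,s_+)}(v)\geq\sigma_{ij}-C_W\d^2$, after relabeling the $W$-dependent constant. The only genuine obstacle is the cap estimate: one must exhibit an extension whose cost is $O(\d^2)$ with a constant independent of the interval length and of the location of the traces on the $\d$-spheres, and the straight-line-plus-quadratic-bound argument of the second step is precisely what supplies this.
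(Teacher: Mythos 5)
Your proposal is correct and is essentially the argument behind the cited proof in \cite{AF}: extend $v$ to an admissible competitor for the infinite-line problem defining $\sigma_{ij}$ by appending caps joining $v(s_\pm)$ to the wells, bound each cap's energy by $O(\delta^2)$ via the straight-line interpolation and the quadratic upper bound $W\leq \f12 C_W|u-a_i|^2$ from Lemma \ref{lemma: potential energy estimate}, and rearrange $\sigma_{ij}\leq J_{(s_-,s_+)}(v)+C_W\delta^2$; you also correctly observe that minimality of $v$ is never used. (The equipartition/Agmon refinement is unnecessary --- a unit-length linear cap with energy $\leq \f12\delta^2+\f12 C_W\delta^2$, extended by the constant $a_i$ beyond, already suffices.)
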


For most of the paper, we consider the variational problem 
\beq\label{epsilon energy}
\min\int_{B_1}\left( \f{\e}{2}|\na u|^2+\f1\e W(u)\right)\,dz,
\eeq
where $W(u)$ satisfies Hypotheses (H1) and (H2). We denote by $u_\e \in W^{1,2}(B_1,\BR^2)$ a global minimizer of the functional \eqref{epsilon energy} with respect to the boundary condition in polar coordinates 
$$
u_\e(1,\theta)=g_\e(\t)\text{ on }\pa B_1,
$$
where $g_\e:[0,2\pi)$ is given by
\begin{equation}\label{def of g_eps}
g_\e(\theta):=\begin{cases}
a_2+g_0(\f{\t-(\f{\pi}{2}+\f{\al_2-\al_1}{2}-c_0\e)}{2c_0\e})(a_1-a_2), &  \t\in [\f{\pi}{2}+\f{\al_2-\al_1}{2}-c_0\e, \f{\pi}{2}+\f{\al_2-\al_1}{2}+c_0\e)\\
a_1,& \t\in[\f{\pi}{2}+\f{\al_2-\al_1}{2}+c_0\e, \f32\pi-\f{\al_3}{2}-c_0\e),\\
a_1+g_0(\f{\t-(\f32\pi-\f{\al_3}{2}-c_0\e)}{2c_0\e})(a_3-a_1), &  \t\in [\f32\pi-\f{\al_3}{2}-c_0\e, \f32\pi-\f{\al_3}{2}+c_0\e)\\
a_3,& \t\in[\f32\pi-\f{\al_3}{2}+c_0\e, \f{3}{2}\pi+\f{\al_3}{2}-c_0\e),\\
a_3+g_0(\f{\t-(\f{3}{2}\pi+\f{\al_3}{2}-c_0\e)}{2c_0\e})(a_2-a_3), &  \t\in [\f{3}{2}\pi+\f{\al_3}{2}-c_0\e, \f{3}{2}\pi+\f{\al_3}{2}+c_0\e),\\
a_2,& \t\in[\f{3}{2}\pi+\f{\al_3}{2}+c_0\e, 2\pi)\cap[0,\f{\pi}{2}+\f{\al_2-\al_1}{2}-c_0\e),
\end{cases}
\end{equation}
where $g_0: [0,1]\ri [0,1]$ is a strictly increasing smooth function that satisfies $g_0(0)=0$, $g_0(1)=1$ and $|g_0'(x)|\leq 2$. Here $\al_1,\al_2,\al_3$ satisfy \eqref{sum of angles} and \eqref{relation of angles}. Without loss of generality we assume $\al_2\geq\al_1$ and consequently by \eqref{relation of angles} and $\al_1+\al_2>\pi$ we have
\beqo
\sigma_{23}\geq \sigma_{13}.
\eeqo

We note that our choice of coordinate axes is such that the $y$-axis bisects $\al_3$. The other perhaps more natural option of choosing the $y$-axis at one of the transitions on the boundary leads to complications. Moreover, from the definition there exists a positive constant $M$ such that $|g_\e|\leq M$. 

The energy $J_\e(u_\e)$ satisfies the following upper bound.
\begin{lemma}\label{lemma:upper bound}
There is a constant $C=C(W)$ such that  
\begin{equation}\label{upper bound}
\int_{B_1}\left\{  \f{\e}{2}|\na u_\e|^2+\f{1}{\e}W(u_\e) \right\}\,dz\leq \s_{12}+\s_{23}+\s_{13} +C\e.
\end{equation}
\end{lemma}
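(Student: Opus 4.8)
The plan is to use the minimality of $u_\e$: since $u_\e$ minimizes $J_\e$ subject to $u_\e=g_\e$ on $\partial B_1$, it suffices to produce a single competitor $\bar u_\e\in W^{1,2}(B_1,\BR^2)$ with $\bar u_\e=g_\e$ on $\partial B_1$ and
\[
J_\e(\bar u_\e)=\int_{B_1}\left(\f\e2|\na\bar u_\e|^2+\f1\e W(\bar u_\e)\right)dz\le \s_{12}+\s_{23}+\s_{13}+C\e,
\]
for then $J_\e(u_\e)\le J_\e(\bar u_\e)$ gives the claim. The natural competitor mirrors the triod already encoded in $g_\e$: the three rays emanating from the origin at the angles $\f\pi2+\f{\al_2-\al_1}2$, $\f32\pi-\f{\al_3}2$, $\f32\pi+\f{\al_3}2$ split $B_1$ into three sectors carrying the values $a_1,a_2,a_3$, and across the ray separating $a_i$ from $a_j$ one glues in the one-dimensional minimizing profile $U_{ij}$ of (H2).

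Concretely, I would set $\bar u_\e\equiv a_i$ in the interior of each sector, away from a strip of normal half-width $w=O(\e)$ about each ray, and inside the strip about the $a_i$--$a_j$ ray put $\bar u_\e(z)=U_{ij}(t/\e)$, where $t$ is the signed distance of $z$ to that ray. Since each ray is a straight segment of length $1$ and $|\na t|=1$, the change of variables $\eta=t/\e$ turns the energy density $\f\e2|\na\bar u_\e|^2+\f1\e W(\bar u_\e)$ into $\f12|U_{ij}'(\eta)|^2+W(U_{ij}(\eta))$ integrated against $\e\,d\eta\,ds$, so this flat layer carries exactly $\s_{ij}$ per unit length, i.e. $\s_{ij}$. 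Summing over the three rays produces the main term $\s_{12}+\s_{23}+\s_{13}$.

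Three error sources remain, each of which I expect to be $O(\e)$. First, truncation: because (H1) forces the minima of $W$ to be nondegenerate, each $U_{ij}$ approaches $a_i,a_j$ exponentially, so cutting the profile at $|t|=w$ and patching the exponentially small mismatch to the neighbouring constant costs only an exponentially small amount per ray. Second, the triple junction: for $r\lesssim C\e$ the three strips overlap and the superposition is undefined, so there I replace $\bar u_\e$ by any bounded Lipschitz interpolation with $|\na\bar u_\e|\le C/\e$, giving energy density $O(1/\e)$ over a region of area $O(\e^2)$, hence $O(\e)$. Third, the boundary: in a radial collar of width $O(\e)$ adjacent to $\partial B_1$ I interpolate between the interior profile $U_{ij}(t/\e)$ and the prescribed datum $g_\e$, which uses the affine profile $g_0$ of comparable angular width $2c_0\e$; this interpolation is needed only inside the three transition windows of $g_\e$, each of arclength $O(\e)$, so again one has area $O(\e^2)$ and energy density $O(1/\e)$, contributing $O(\e)$.

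The main obstacle is the bookkeeping at the triple junction and in the boundary collar: one must verify that the strips may be taken pairwise disjoint once $r\gtrsim C\e$ (which holds because the opening angles lie in $(0,\pi)$, so the rays meet transversally) and that the interpolations can be performed while keeping $\bar u_\e\in W^{1,2}$, uniformly bounded, and equal to $g_\e$ on $\partial B_1$. Once these gluings are arranged, collecting the main term $\s_{12}+\s_{23}+\s_{13}$ together with the three $O(\e)$ contributions yields $J_\e(\bar u_\e)\le \s_{12}+\s_{23}+\s_{13}+C\e$, and minimality of $u_\e$ completes the proof.
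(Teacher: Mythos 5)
Your overall strategy --- an explicit triod-shaped competitor evaluated against the minimality of $u_\e$ --- is exactly the paper's route (the paper itself only sketches this, deferring the construction to Appendix A of \cite{alikakos2022triple}). However, there is a genuine quantitative gap in your first error estimate, the truncation step. If the strip about the $a_i$--$a_j$ ray has normal half-width $w=K\e$ with $K$ fixed, then at the strip edge the profile takes the value $U_{ij}(\pm K)$, whose distance to the nearby well is $\delta(K)\sim e^{-cK}$: exponentially small in $K$, but a \emph{constant independent of} $\e$. Patching this mismatch to the constant $a_i$ across a layer of width $\ell$ costs, per unit length of the ray, about $\e\,\delta^2/\ell+\delta^2\ell/\e$ (gradient term plus potential term, using $W(u)\le C|u-a_i|^2$ near the well from Lemma \ref{lemma: potential energy estimate}), which is optimized at $\ell\sim\e$ and still leaves $\sim\delta(K)^2$ per unit length --- an $O(1)$ total contribution, not the ``exponentially small'' one you claim. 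Forcing the truncation cost down to $O(\e)$ requires $K\sim\log(1/\e)$, i.e.\ strips of width $\e\log(1/\e)$; but then the three strips overlap in a junction ball of radius $O(\e\log(1/\e))$, and your crude Lipschitz interpolation with $|\na\bar u_\e|\le C/\e$ there costs $O(\e\log^2(1/\e))$, overshooting the claimed bound $C\e$. As written, your bookkeeping therefore does not close.

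The standard repair is to keep the exponential tails rather than truncate at scale $\e$: in each sector $D_i$ set $\bar u_\e=a_i+\eta_{ij}\left(U_{ij}(t_{ij}/\e)-a_i\right)+\eta_{ik}\left(U_{ik}(t_{ik}/\e)-a_i\right)$, where $t_{ij}$ is the signed distance to the $a_i$--$a_j$ ray and the cutoffs $\eta_{ij},\eta_{ik}$ have gradients bounded \emph{independently of} $\e$ and switch in the middle of the sector. At distance $r$ from the junction the mid-sector corrections are $O(e^{-cr/\e})$ by the exponential decay of $U_{ij}$ (which, as you note, follows from the nondegeneracy in (H1)), so the gluing cost is of order $\int_{K\e}^{1}\e^{-1}e^{-2cr/\e}\,r\,dr=O(\e)$; the junction ball can then be kept of radius $O(\e)$, where your interpolation estimate (density $O(1/\e)$ over area $O(\e^2)$) is correct, and your boundary-collar estimate goes through as described (note only that $g_0$ is a fixed smooth profile with $|g_0'|\le 2$, not affine --- harmless). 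With this modification your argument delivers the stated bound $\s_{12}+\s_{13}+\s_{23}+C\e$, in line with the construction the paper cites.
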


The upper bound can be proved by estimating the energy of an explicitly constructed energy competitor. We refer the reader to \cite[Appendix A]{alikakos2022triple} for the detailed proof of the case where $\sigma_{ij}$ are all equal. The arguments can be applied to the general potential case easily.

In addition, we can control $|u_\e|$ and $|\na u_\e|$ thanks to the smoothness assumption of $W$ and standard elliptic regularity theory. The proof is omitted.

\begin{lemma}
Let $u_\e$ minimize the functional \eqref{epsilon energy} with the boundary condition $u_\e=g_\e$ on $\pa B_1$. There is a constant $M$ independent of $\e$, such that  
\begin{equation}\label{gradient bound}
|u_\e(z)|\leq M,\quad |\na u_\e(z)|\leq\f{M}{\e}, \quad \forall z\in B_1. 
\end{equation}
\end{lemma}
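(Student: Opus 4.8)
The plan is to derive both bounds from the Euler--Lagrange equation of \eqref{epsilon energy}, which reads $\e^2 \D u_\e = W_u(u_\e)$ on $B_1$, combined with elliptic regularity. The minimizer is smooth in the interior by standard bootstrapping (as $W\in C^2$), so the pointwise arguments below are justified, and the final constant $M$ will be the maximum of the several structural constants that appear, all independent of $\e$.

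For the $L^\infty$ bound I would apply the maximum principle to $\varphi:=|u_\e|^2$. A direct computation gives $\D\varphi = 2u_\e\cdot\D u_\e + 2|\na u_\e|^2 = \f{2}{\e^2} W_u(u_\e)\cdot u_\e + 2|\na u_\e|^2$. Suppose $\varphi$ took a value exceeding $\max\{M^2,\sup_{\pa B_1}|g_\e|^2\}$; then its maximum over $\bar B_1$ would be attained at an interior point $z^*$ with $|u_\e(z^*)|>M$. At $z^*$ one has $\D\varphi(z^*)\le 0$, yet the identity forces $\D\varphi(z^*)\ge \f{2}{\e^2}W_u(u_\e(z^*))\cdot u_\e(z^*)>0$ by the confinement hypothesis $W_u(u)\cdot u>0$ for $|u|>M$ in (H1), a contradiction. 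Hence $|u_\e|\le\max\{M,\sup_{\pa B_1}|g_\e|\}$, and since $|g_\e|\le M$ by construction this yields $|u_\e|\le M$ on all of $B_1$.

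The gradient bound follows from a rescaling that normalizes the $\e$-dependence. Fix $z_0\in B_1$ at distance at least $2\e$ from $\pa B_1$ and set $w(\zeta):=u_\e(z_0+\e\zeta)$; then $\D_\zeta w = \e^2\D u_\e(z_0+\e\zeta)=W_u(w)$, so $w$ solves the $\e$-free equation $\D w=W_u(w)$ on a ball of radius at least $2$, with $|w|\le M$ and hence $|W_u(w)|\le C(M,W)$ bounded in $L^\infty$. Interior $W^{2,p}$ estimates for any $p>2$, together with the embedding $W^{2,p}\hookrightarrow C^{1,\alpha}$ in dimension two, give $|\na_\zeta w(0)|\le C(\|w\|_{L^\infty}+\|W_u(w)\|_{L^\infty})\le C'$, and undoing the scaling yields $|\na u_\e(z_0)|=\e^{-1}|\na_\zeta w(0)|\le C'/\e$. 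This settles the estimate at interior points.

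The main obstacle is promoting the gradient bound up to $\pa B_1$, since the rescaling must now be centered at a boundary point. Two facts make this work. First, after dilating by $\e$ the unit circle becomes a circle of radius $\e^{-1}$, which is essentially flat on the $O(1)$ scale on which the estimate is carried out, so the boundary can be locally flattened. Second, the rescaled Dirichlet datum $\zeta\mapsto g_\e(z_0+\e\zeta)$ has $O(1)$ tangential derivatives: the explicit form \eqref{def of g_eps} has transition layers of width $\sim c_0\e$ built from $g_0$ with $|g_0'|\le 2$, so that $|\na_{\mathrm{tan}} g_\e|\le C/\e$ on $\pa B_1$ and the extra factor $\e$ from the chain rule exactly cancels this growth. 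With these ingredients one applies boundary (global) $W^{2,p}$ or Schauder estimates on the flattened half-ball to obtain $|\na_\zeta w|\le C$ near the boundary point, and undoing the scaling completes the bound $|\na u_\e|\le M/\e$ throughout $B_1$.
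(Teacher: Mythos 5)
Your proof is correct and follows exactly the route the paper has in mind: the paper omits the proof, remarking only that the bounds follow from the smoothness of $W$ and standard elliptic regularity, which is precisely your combination of the maximum principle applied to $|u_\e|^2$ (using the confinement condition $W_u(u)\cdot u>0$ for $|u|>M$ from (H1) together with $|g_\e|\le M$) and the rescaling $w(\zeta)=u_\e(z_0+\e\zeta)$ solving the $\e$-free equation $\D w=W_u(w)$, followed by interior and boundary elliptic estimates. One small point worth recording at the boundary step: Schauder or $W^{2,p}$ estimates up to the boundary require more than the $O(1)$ Lipschitz bound on the rescaled datum that you verify, but this is harmless here since $g_0$ is smooth and the transition layers of \eqref{def of g_eps} have width $\sim c_0\e$, so the $k$-th tangential derivative of the rescaled datum scales like $\e^k\cdot O(\e^{-k})=O(1)$ for every $k$, making the datum uniformly smooth on the $O(1)$ scale and fully justifying the boundary estimate.
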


\section{Lower bound for $J_\e(u_\e)$} \label{sec:lower bound}

\begin{proposition}\label{prop: lower bound epsilon 1/3}(weak lower bound)
There exist constants $C$ and $\e_0$, such that for any $\e\leq \e_0$, it holds 
\beq\label{lower bound e 1/3}
\int_{B_1} \left(\f{\e}{2}|\na u_\e|^2+\f{1}{\e}W(u_\e)\right)\,dz\geq \sigma_{12}+\sigma_{13}+\sigma_{23}-C_1\e^{\f13}.
\eeq 

\end{proposition}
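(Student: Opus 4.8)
The plan is to reduce the two--dimensional estimate to a family of one--dimensional problems by slicing $B_1$ along the circles $\partial B_r$ and exploiting that the tangential part of the gradient already lower--bounds the energy. Writing $\partial_\tau u_\varepsilon$ for the arc--length (tangential) derivative, one has $|\nabla u_\varepsilon|^2\ge|\partial_\tau u_\varepsilon|^2$, so for any cut--off radius $\rho\in(0,1)$ Fubini in polar coordinates (with $dz=dr\,ds$, $ds$ the arc length on $\partial B_r$) gives
\[
J_\varepsilon(u_\varepsilon)\ge\int_{B_1\setminus B_\rho}\Big(\tfrac{\varepsilon}{2}|\partial_\tau u_\varepsilon|^2+\tfrac1\varepsilon W(u_\varepsilon)\Big)\,dz=\int_\rho^1\Big(\int_{\partial B_r}\big(\tfrac{\varepsilon}{2}|\partial_\tau u_\varepsilon|^2+\tfrac1\varepsilon W(u_\varepsilon)\big)\,ds\Big)\,dr.
\]
Thus it suffices to bound the inner circle energy from below for a large--measure set of radii and then integrate.

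For the per--circle bound I would show that, whenever the trace $u_\varepsilon|_{\partial B_r}$ comes within $\delta$ of each of the three minima $a_1,a_2,a_3$, the circle energy is at least $\sigma_{12}+\sigma_{13}+\sigma_{23}-3C_W\delta^2$. Picking one point on $\partial B_r$ near each $a_i$ splits the circle into three arcs, each joining two distinct minima; rescaling arc length by $\varepsilon$ turns the circle functional into the unrescaled $1$D action, so Lemma~\ref{lemma: 1D energy estimate} (applied to the trace as a competitor) gives $\ge\sigma_{ij}-C_W\delta^2$ on each arc. Crucially, the three arcs join the three minima in a cycle and therefore realize all three pairs $\{12,13,23\}$ regardless of their cyclic order; here the triangle inequality \eqref{cond on sigma} guarantees that no matter how the trace wanders, no configuration can undercut $\sigma_{12}+\sigma_{13}+\sigma_{23}$. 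Summing the three arcs yields the claim, so the order of the phases around the junction need not be tracked.

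I expect the genuine difficulty to lie in controlling the \emph{exceptional radii}: one must guarantee that the trace visits all three wells for all but a small set of $r\in(\rho,1)$. From Lemma~\ref{lemma: potential energy estimate} and the upper bound (Lemma~\ref{lemma:upper bound}), the region where $u_\varepsilon$ is $\delta$--far from every minimum is small, $|B_1\setminus\bigcup_i\{|u_\varepsilon-a_i|<\delta\}|\le C\varepsilon/\delta^2$; but this alone only says the trace is near \emph{some} well on most of each circle, not near all three. To upgrade it I would combine this measure bound with the structure of the boundary datum $g_\varepsilon$, which already exhibits all three wells in the correct order, together with a topological separation argument: if some $a_i$ were missed on a whole sub--interval of radii, the $\delta$--neighborhood of $a_i$ would be pinched off from its boundary arc by a curve lying in $\{|u_\varepsilon-a_i|\ge\delta\}$, and the potential energy carried by such separating curves is at least $\tfrac{c_W\delta^2}{2\varepsilon}$ times their length. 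Converting this into a bound on the measure of exceptional radii down to scale $\rho$ yields a bound of the form $C\varepsilon/(\delta^2\rho)$; making this quantitative and uniform for $r\gtrsim\rho$ is precisely the delicate computation that replaces the symmetric argument of \cite{alikakos2022triple} and is the reason for the detailed estimates in the Appendices.

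Finally I would integrate the per--circle bound over the good radii and discard both the bad radii and $B_\rho$ (on all of which the circle energy is $\ge 0$), so that the energy deficit is controlled by three competing terms,
\[
\big(\sigma_{12}+\sigma_{13}+\sigma_{23}\big)-J_\varepsilon(u_\varepsilon)\lesssim \rho+\frac{\varepsilon}{\delta^2\rho}+\delta^2.
\]
Optimizing first in $\rho$ (balancing $\rho$ against $\varepsilon\delta^{-2}\rho^{-1}$, i.e.\ $\rho\sim\sqrt\varepsilon/\delta$) and then in $\delta$ (balancing $\sqrt\varepsilon/\delta$ against $\delta^2$, i.e.\ $\delta\sim\varepsilon^{1/6}$) gives $\rho\sim\varepsilon^{1/3}$ and a deficit of order $\varepsilon^{1/3}$, which is exactly \eqref{lower bound e 1/3}. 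The slicing, the $1$D reduction via Lemma~\ref{lemma: 1D energy estimate}, and this two--parameter optimization are robust; the entire burden of the general--surface--tension case is carried by the quantitative ``all three wells are visited'' statement described above.
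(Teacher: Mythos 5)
Your reduction to circles and the per-circle three-arc bound are sound as far as they go: picking one point near each well splits $\partial B_r$ into three arcs whose endpoint pairs necessarily realize all three of $\{12,13,23\}$, and Lemma \ref{lemma: 1D energy estimate} applies to each arc after rescaling. (A small misattribution: the triangle inequality \eqref{cond on sigma} plays no role in that cyclic count; in the paper it enters through (H2), through the well-defined Young angles \eqref{sum of angles}--\eqref{relation of angles}, and in the appendix computations.) The fatal gap is your control of exceptional radii. You claim that if the circles $\partial B_r$, $r$ in some interval, all miss the $\delta$-neighborhood of $a_i$, then the resulting separating set carries potential energy at rate $\f{c_W\delta^2}{2\e}$ per unit length. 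But that set lies only in $\{|u_\e-a_i|\geq\delta\}$, which contains full neighborhoods of the other two wells where $W$ vanishes; Lemma \ref{lemma: potential energy estimate} bounds $W$ from below only where $u_\e$ is $\delta$-far from \emph{all three} minima. A competitor in which, say, the $a_3$ phase is confined to a thin layer along its boundary arc, with a long $a_1$/$a_2$ interface through the bulk, has an $O(1)$-measure set of radii missing $a_3$ while the separating annulus consists almost entirely of pure $a_1$ and $a_2$ phase, at negligible potential cost. Such configurations are of course not minimizers, but excluding them quantitatively is exactly the content of the lower bound, so it cannot be presupposed; consequently your bound $C\e/(\delta^2\rho)$ on bad radii is unsubstantiated, and the three-term deficit estimate and the optimization $\delta\sim\e^{1/6}$, $\rho\sim\e^{1/3}$ do not follow.

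Repairing this is not a matter of tightening constants. Any fix must make a missed well cost energy in the \emph{radial} direction (transitions along rays through the $a_3$ boundary arc), but then the term $\f1\e W$ is needed simultaneously in the tangential per-circle estimate and in the radial compensation, and it cannot be counted twice; this double-counting problem is precisely what the paper's proof is engineered around. The paper slices in two orthogonal Cartesian directions, splits $W=\sin^2\theta\,W+\cos^2\theta\,W$ with direction-dependent weights, and optimizes over $\theta$ at the end. Moreover it never needs the statement ``most circles visit all three wells'': it introduces the level $y^*$ at which $\lam_3$ effectively vanishes, uses vertical segments pinned by the boundary data $g_\e$ above $y^*$ and horizontal segments through the $a_3$ region below $y^*$ (the set $M$, whose complement $S$ is controlled by the potential exactly because on $S$ the map is $\e^{1/6}$-far from \emph{all} wells), and closes with the explicit finite-dimensional minimization of $E(\mu_1,\mu_2,y^*)$ in Appendix \ref{app: min prob}, where the unequal $\sigma_{ij}$ and the Young angles enter, plus the separate Case 2 inequality of Appendix \ref{app:case2 ineq}. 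If you wish to salvage the polar slicing, you would need an analogous angle-weighted splitting of $W$ between radial and tangential estimates together with a case analysis on where the transitions occur --- at which point you would essentially be re-deriving the paper's argument in polar coordinates.
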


\begin{proof}
We follow essentially \cite[Proposition 3.1]{alikakos2022triple} with the necessary modifications accounting for the unequal surface tensions. For the sake of convenience we write $u_\e=u$ throughout the proof. Set the family of horizontal line segments $\g_y$ for $y\in [-\cos{\f{\al_3}{2}},1]$ as
\beqo
\g_y:=\{(x,y):\; x\in\mathbb{R}\}\cap B_1.
\eeqo

Then we define functions $\lam_1(y)$, $\lam_2(y)$, $\lam_3(y)$ for $y\in [-\cos{\f{\al_3}{2}},1]$,
\beqo
\lam_i(y):=\mathcal{L}^1(\g_y\cap \{|u(x,y)-a_i|<\e^{\f16}\}),\quad i\in \{1,2,3\}.
\eeqo
Here $\mathcal{L}^1$ denotes the 1-dimensional Lebesgue measure. Then by the boundary condition we know for any $y\in [-\cos{\f{\al_3}{2}}+c_0\e, \cos{\f{\al_2-\al_1}{2}}-c_0\e)$, it holds that $\lam_1(y)>0$ and $\lam_2(y)>0$. 

Define the following quantities.

\begin{align*}
&y^*:=\min\{y\in [-\cos{\f{\al_3}{2}}+c_0\e,1]:\lam_1(y)+\lam_2(y)\geq \cl(\g_y)-\e^{\f13}\},\\
&\zeta(x):=\min\{y^*,\sqrt{1-x^2}\},\\
&K:=\{x\in[-\sin{\f{\al_3}{2}}+c_0\e, \sin{\f{\al_3}{2}}-c_0\e]:\  |u((x,\zeta(x)))-a_i|<\e^{\f16},\ i=1 \text{ or } 2\},\\
&M:=\{y\in [-\cos{\f{\al_3}{2}}+c_0\e,y^*]: \lam_3(y)> 0\},\\
&K_1^+:=\{x\in[-\sin{\f{\al_2-\al_1}{2}}+c_0\e,\sin{\f{\al_3}{2}}-c_0\e]:\ |u((x,\zeta(x)))-a_1|<\e^{\f16}\},\\
&K_2^-:=\{x\in[-\sin{\f{\al_3}{2}}+c_0\e,-\sin{\f{\al_2-\al_1}{2}}-c_0\e]:\ |u((x,\zeta(x)))-a_2|<\e^{\f16}\},\\
&\mu_1:=\cl(K_1^+),\\
&\mu_2:=\cl(K_2^-),\\
&\Om_1:=\{z=(x,y)\in B_1: \ y\geq y^*\},\\
&\Om_2:=\{z=(x,y)\in B_1: \ y< y^*\}.
\end{align*}

We consider two cases according to the value of $y^*$.

\begin{case}
\textbf{$y^*\leq \cos{\f{\al_2-\al_1}{2}}-c_0\e$.}  In $\Om_1$, we split $W(u)$ into two parts
\beqo
W=\sin^2\theta_1 W+\cos^2\theta_1 W,
\eeqo
for some $\theta_1\in[0,\f{\pi}{2}]$ to be determined later. 

Let $x_1=\min\{\sqrt{1-|y^*|^2}, \sin{\f{\al_3}{2}}-c_0\e\}$. For any $x\in[-x_1,x_1]$, we consider the vertical line $\{(x,y):y\in\mathbb{R}\}$. Let $A(x)$ denote the point $(x,\sqrt{1-x^2})$ and $B(x)$ denote the point $(x,y^*)$, then $A(x),\,B(x)$ are the two intersections of $\{(x,y):y\in\BR\}$ with $\pa\Om_1$. By the boundary condition and the definition, if $x\in K_1^+$, then $u(A(x))=a_2$ and $|u(B(x))-a_1|\leq \e^{\f16}$. Similarly if $x\in K_2^-$, then $u(A(x))=a_1$ and $|u(B(x))-a_2|<\e^{\f16}$. Therefore we can utilize Lemma \ref{lemma: 1D energy estimate} to estimate the energy in $\Om_1$ in the vertical direction:
\begin{equation}\label{ene:om1,vertical}
    \begin{split}
        &\int_{\Om_1}\left(\f{\e}2|\pa_y u|^2+\f{\sin^2\theta_1}{\e} W(u)\right)\,dxdy\\
        \geq & \int_{K_1^+\cup K_2^-} \int_{y^*}^{\sqrt{1-x^2}} \left(\f{\e}2|\pa_y u|^2+\f{\sin^2\theta_1}{\e} W(u)\right)\,dydx\\
        \geq & \int \int_{K_1^+\cup K_2^-} \sin\theta_1\left(\sigma_{12}-C\e^{\f13}\right)\,dx\\
        \geq &\sin\theta_1(\mu_1+\mu_2)(\sigma_{12}-C\e^{\f13}).
    \end{split}
\end{equation}

For $y\in(y^*,\cos{\f{\al_2-\al_1}{2}}-c_0\e)$, the horizontal line $\{(x,y):x\in\BR\}$ intersects with $\pa \Om_1$ at two points, where $u$ takes the value $a_1,a_2$ respectively. Therefore we have in $\Om_1$ in the horizontal direction
\begin{equation}\label{ene:om1,horizontal}
    \begin{split}
        &\int_{\Om_1}\left(\f{\e}2|\pa_x u|^2+\f{\cos^2\theta_1}{\e} W(u)\right)\,dxdy\\
        \geq & \int_{y^*}^{\cos{\f{\al_2-\al_1}{2}}-c_0\e}\int_{\g_y} \left(\f{\e}2|\pa_x u|^2+\f{\cos^2\theta_1}{\e} W(u)\right)\,dxdy\\
        \geq & \int_{y^*}^{\cos{\f{\al_2-\al_1}{2}}-c_0\e} \cos\theta_1\sigma_{12}\,dy\\
        =&\cos\theta_1\sigma_{12}(\cos{\f{\al_2-\al_1}{2}}-c_0\e-y^*).
    \end{split}
\end{equation}

Adding \eqref{ene:om1,vertical} and \eqref{ene:om1,horizontal} gives 
\begin{equation}\label{ene:om1}
    \begin{split}
        &\int_{\Om_1}\left(\f{\e}2|\na u|^2+\f{1}{\e}W(u)\right)\,dxdy\\
        \geq & \sin\theta_1(\mu_1+\mu_2)(\sigma_{12}-C\e^{\f13})+\cos\theta_1\sigma_{12}(\cos{\f{\al_2-\al_1}{2}}-c_0\e-y^*).
    \end{split}
\end{equation}
Now maximizing with respect to $\theta_1$ we obtain
\begin{equation}\label{ene:om1 final}
\begin{split}
    &\int_{\Om_1}\left(\f{\e}2|\na u|^2+\f{1}{\e}W(u)\right)\,dxdy\\
        \geq &\sigma_{12}\sqrt{(\mu_1+\mu_2)^2+(\cos{\f{\al_2-\al_1}{2}}-y^*)^2}-C\e^{\f13},
\end{split}
\end{equation}
where we have also utilized Cauthy-Schwarz to get the last inequality.

In $\Om_2$, we split $W(u)$ as 
\beqo
W=\sin^2\theta_2W+\cos^2\theta_2W, \quad \text{for some }\theta_2\in[0,\f{\pi}{2}].
\eeqo

For any $x\in[-\sin{\f{\al_3}{2}}+c_0\e, \sin{\f{\al_3}{2}}-c_0\e]$, $A(x)=(x,\zeta(x))$ and $B(x)=(x,-\sqrt{1-x^2})$ are the two intersections of $\{(x,y):y\in\BR\}$ with $\pa\Om_2$. From \eqref{def of g_eps} we know that $u(B(x))=a_3$. For $A(x)$ we have 
\beqo
\begin{cases}
|u(A(x))-a_1|<\e^{\f16}, &x\in K_1^+\cup\left(K\cap[-\sin{\f{\al_3}{2}}+c_0\e,-\sin{\f{\al_2-\al_1}{2}}-c_0\e)\setminus K_2^-\right),\\
|u(A(x))-a_2|<\e^{\f16}, &x\in  K_2^-\cup\left(K\cap[-\sin{\f{\al_2-\al_1}{2}}+c_0\e,\sin{\f{\al_3}{2}}-c_0\e)\setminus K_1^+\right).
\end{cases}
\eeqo
Note that since $\sin\f{\al_3}{2}>\sin{\f{\al_2-\al_1}{2}}$, the interval $(-\sin{\f{\al_3}{2}}+c_0\e,-\sin{\f{\al_2-\al_1}{2}}-c_0\e)$ is well-defined when $\e$ is sufficiently small. Therefore utilizing Lemma \ref{lemma: 1D energy estimate} we can estimate the energy in $\Omega_2$ in the vertical direction as follows:
\beq\label{ene: om2 vertical}
\begin{split}
    &\int_{\Om_2}\left( \f{\e}2|\pa_y u|^2+\f{\sin^2\theta_2}{\e} W(u) \right)\,dxdy\\
    \geq & \int_{x\in K} \int_{-\sqrt{1-x^2}}^{\zeta(x)} \left( \f{\e}2|\pa_y u|^2+\f{\sin^2\theta_2}{\e} W(u) \right)\,dydx\\
    \geq &(\sin{\f{\al_3}{2}}-\sin{\f{\al_2-\al_1}{2}}-2c_0\e-\e^{\f13}-\mu_2+\mu_1)\sin\theta_2 \sigma_{13}\\
    &\qquad+ (\sin{\f{\al_3}{2}}+\sin{\f{\al_2-\al_1}{2}}-2c_0\e-\e^{\f13}-\mu_1+\mu_2)\sin\theta_2\sigma_{23}\\
    \geq & \sin\theta_2\left[\sin{\f{\al_3}{2}}(\sigma_{13}+\sigma_{23})+(\mu_2-\mu_1+\sin{\f{\al_2-\al_1}{2}})(\sigma_{23}-\sigma_{13})\right]-C\e^{\f13},
\end{split}
\eeq
where in the last inequality the constant $C$ only depends on $W$. 

On the domain $\Om_2$, we claim that there exists a constant $C$ such that 
\begin{equation*}
    \mathcal{L}^1([-\cos{\f{\al_3}{2}}+c_0\e,y^*]\setminus M)<C\e^{\f13}.
\end{equation*}

Indeed, we set 
\beqo
S:=\{y\in[-\cos{\f{\al_3}{2}}+c_0\e,y^*]: \lam_3(y)=0\}=[-\cos{\f{\al_3}{2}}+c_0\e,y^*]\setminus M.
\eeqo
For any $y\in S$, the definitions of $y^*$ and $S$ imply that $\lam_1(y)+\lam_2(y)+\lam_3(y)<\mathcal{L}^1(\g_y)-\e^{\f13}$, i.e.
\begin{equation*}
    \cl(\{x \in [-\sqrt{1-y^2},\sqrt{1-y^2}]: |u(x,y)-a_i|>\e^{\f16},\ \forall i\})>\e^{\f13}.
\end{equation*}
From the energy upper bound \eqref{upper bound} and Lemma \ref{lemma: potential energy estimate} we get
\beqo
\begin{split}
    &C\geq \f{1}{\e}\int_S\int_{\g_y} W(u)\ dxdy\geq  \f{c_W}{2\e}\cl(S)\e^{\f13}\e^{\f13}\\
    &\Rightarrow \cl(S)\leq C\e^{\f13},
\end{split}
\eeqo
for some constant $C$ depending on $W$, and thus the claim is established. We note that the argument above utilized just that the energy is bounded by a constant. The precise upper bound in \eqref{upper bound} however plays a role in several places later.

Now for any $y\in M$, it holds that
 \beqo
 u(-\sqrt{1-y^2},y)=a_1,\ u(\sqrt{1-y^2},y)=a_2,\ \exists (x_0,y)\in \g_y \text{ s.t. }|u(x_0,y)-a_3|<\e^{\f16}.
 \eeqo
We have therefore in $\Om_2$ in the horizontal direction: 
\begin{equation}\label{ene: om2 horizontal}
     \begin{split}
     &\int_{\Om_2} \left( \f\e2|\pa_{x}u|^2+\f{\cos^2\theta_2}{\e}W(u) \right)\,dxdy\\
     \geq &\int_{y\in M} \int_{\g_y}  \left(  \f\e2|\pa_{x}u|^2+\f{\cos^2\theta_2}{\e}W(u) \right)\,dxdy\\
    =& \int_{y\in M} \cos\theta_2 \left\{ \int_{-\sqrt{1-y^2}}^{x_0} +\int_{x_0}^{\sqrt{1-y^2}}\right\} \left(\f{\e}{2\cos\theta_2}|\pa_{x}u|^2+ \f{\cos\theta_2}{\e} W(u)\right)\,dx\\
    \geq & (y^*+\cos{\f{\al_3}{2}})\cos\theta_2(\sigma_{13}+\sigma_{23})-C\e^{\f13}.
     \end{split}
\end{equation}

Now proceeding as before by first adding \eqref{ene: om2 vertical} and \eqref{ene: om2 horizontal} and then maximizing the lower bound with respect to $\theta_2$ we obtain
\beq\label{ene:om2}
\begin{split}
&\int_{\Om_2}\left( \f{\e}{2}|\na u|^2+\f{1}{\e}W(u) \right)\,dxdy\\
\geq & \sqrt{\left[\sin{\f{\al_3}{2}}(\sigma_{13}+\sigma_{23})+(\mu_2-\mu_1+\sin{\f{\al_2-\al_1}{2}})(\sigma_{23}-\sigma_{13})\right]^2+\left[(y^*+\cos{\f{\al^3}{2}})(\sigma_{13}+\sigma_{23})\right]^2}-C\e^{\f13}. 
\end{split}
\eeq

Finally we can add up \eqref{ene:om1 final} and \eqref{ene:om2} to get
\beq\label{ene:total}
\begin{split}
    &\int_{B_1} \left(\f{\e}{2}|\na u|^2+\f{1}{\e}W(u)\right)\,dz\\
    \geq & \sqrt{\left[\sin{\f{\al_3}{2}}(\sigma_{13}+\sigma_{23})+(\mu_2-\mu_1+\sin{\f{\al_2-\al_1}{2}})(\sigma_{23}-\sigma_{13})\right]^2+\left[(y^*+\cos{\f{\al^3}{2}})(\sigma_{13}+\sigma_{23})\right]^2}\\
    &\quad +\sigma_{12}\sqrt{(\mu_1+\mu_2)^2+(\cos{\f{\al_2-\al_1}{2}}-y^*)^2}-C\e^{\f13}\\
    =:& E(\mu_1,\mu_2,y^*)-C\e^{\f13}.
\end{split}
\eeq
Then we examine the following minimization problem 
\beq\label{min problem}
\begin{split}
    &\qquad \min{\ E(\mu_1,\mu_2,y^*)} \\
    \text{ such that } &\mu_1\in[0,\sin{\f{\al_3}{2}}+\sin{\f{\al_2-\al_1}{2}}-2c_0\e],\\
    &\mu_2\in[0,\sin{\f{\al_3}{2}}-\sin{\f{\al_2-\al_1}{2}}-2c_0\e],\\ &y_*\in [-\cos{\f{\al_3}{2}}+c_0\e,\cos{\f{\al_2-\al_1}{2}}-c_0\e].
\end{split}
\eeq
Direct calculation implies that 
\beqo
\qquad \min{\ E(\mu_1,\mu_2,y^*)}=\sigma_{12}+\sigma_{13}+\sigma_{23},
\eeqo
and the minimum is reached when
\beqo
\mu_1=\sin{\f{\al_2-\al_1}{2}},\ \mu_2=0,\ y^*=0.
\eeqo
The calculation is straightforward and is provided in the Appendix \ref{app: min prob}. Therefore we have shown that \eqref{lower bound e 1/3} holds when $y^*\leq \cos\f{\al_2-\al_1}{2}-c_0\e$. 
\end{case}

\begin{case}
$y^*>\cos\f{\al_2-\al_1}{2}-c_0\e$. We will show in this case the total energy is strictly larger than $\sigma_{12}+\sigma_{13}+\sigma_{23}$. We write
\beqo
W=\sin^2\theta_3W+\cos^2\theta_3 W,\quad \text{for some }\theta_3\in[0,\f{\pi}{2}].
\eeqo
By the boundary condition \eqref{def of g_eps},
\begin{align*}
    &u((x,\sqrt{1-x^2}))=a_1,\ u((x,-\sqrt{1-x^2}))=a_3,\quad \forall x\in [-\sin{\f{\al_3}{2}}+c_0\e,-\sin{\f{\al_2-\al_1}{2}}-c_0\e],\\
    &u((x,\sqrt{1-x^2}))=a_2,\ u((x,-\sqrt{1-x^2}))=a_3,\quad \forall x\in [-\sin{\f{\al_2-\al_1}{2}}+c_0\e,\sin{\f{\al_3}{2}}-c_0\e],
\end{align*}
which allows us to get the following estimate for the energy in the vertical direction:
\begin{equation}\label{ene:case2,vertical}
    \begin{split}
        &\int_{B_1}\left(\f{\e}{2}|\pa_y u|^2+\f{\sin^2\theta_3}{\e}W(u)\right)\,dz\\
        \geq & \int_{-\sin{\f{\al_3}{2}}+c_0\e}^{-\sin{\f{\al_2-\al_1}{2}}-c_0\e} \sigma_{13}\sin\theta_3\,dx+\int_{-\sin{\f{\al_2-\al_1}{2}}+c_0\e}^{\sin{\f{\al_3}{2}}-c_0\e} \sigma_{23}\sin\theta_3\,dx\\
        \geq & \sin\theta_3\left[ \left( \sin{\f{\al_3}{2}}- \sin{\f{\al_2-\al_1}{2}}\right)\sigma_{13} +\left( \sin{\f{\al_3}{2}}+ \sin{\f{\al_2-\al_1}{2}}\right)\sigma_{23} \right]-C\e.
    \end{split}
\end{equation}

For the horizontal direction, utilizing similar arguments as in Case 1 we have 
\beqo
\cl(\tilde{M})\geq \cos{\f{\al_3}{2}}+\cos{\f{\al_2-\al_1}{2}}-C\e^{\f13},
\eeqo
where $\tilde{M}:=M\cap[-\cos{\f{\al_3}{2}}+c_0\e,\cos{\f{\al_2-\al_1}{2}}-c_0\e]$. For any $y\in\tilde{M}$, 
\begin{equation*}
    \int_{\g_y} \left(\f{\e}{2}|\pa_x u|^2+\f{\cos^2\theta_3}{\e}W(u)\right)\,dx\geq \cos\theta_3 (\sigma_{13}+\sigma_{23})-C\e^{\f13}.
\end{equation*}
Integrating this with respect to $y$ gives
\begin{equation}\label{ene:case2, horizontal}
    \int_{B_1}\left(\f{\e}{2}|\pa_x u|^2+\f{\cos^2\theta_3}{\e}W(u)\right)\,dz\geq \cos\theta_3 (\sigma_{13}+\sigma_{23})(\cos{\f{\al_3}{2}}+\cos{\f{\al_2-\al_1}{2}})-C\e^{\f13}.
\end{equation}
Adding \eqref{ene:case2,vertical} and \eqref{ene:case2, horizontal} together and maximizing with respect to $\theta_3$ gives
\begin{equation}\label{ene:case2,total}
    \begin{split}
        &\int_{B_1}\left(\f{\e}{2}|\na u|^2+\f{1}{\e}W(u)\right)\,dz\\
        \geq &\bigg(\left[ (\sin{\f{\al_3}{2}}- \sin{\f{\al_2-\al_1}{2}})\sigma_{13} +( \sin{\f{\al_3}{2}}+ \sin{\f{\al_2-\al_1}{2}})\sigma_{23} \right]^2 \\
        &\qquad\qquad\quad +\left[(\sigma_{13}+\sigma_{23})(\cos{\f{\al_3}{2}}+\cos{\f{\al_2-\al_1}{2}})\right]^2\bigg)^{\f12}-C\e^{\f13}\\
        >& \sigma_{12}+\sigma_{13}+\sigma_{23}-C\e^{\f13}.
    \end{split}
\end{equation}
The final inequality can be obtained through direct calculation, as presented in the Appendix \ref{app:case2 ineq}. This completes the proof of the proposition.
\end{case}

\end{proof}

The power $\f13$ can be improved to $\f12$.

\begin{proposition}[lower bound of order $\e^{\f12}$] \label{prop epsilon 1/2}
There exist constants $C(W)$ and $\e_1$, such that for any $\e\leq \e_1$, it holds 
\beq\label{lower bound e 1/2}
\int_{B_1} \left(\f{\e}{2}|\na u_\e|^2+\f{1}{\e}W(u_\e)\right)\,dz\geq \sigma_{12}+\sigma_{13}+\sigma_{23}-C\e^{\f12}.
\eeq 
\end{proposition}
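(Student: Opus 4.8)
The plan is to re-run the slicing argument of Proposition \ref{prop: lower bound epsilon 1/3} verbatim, but with the two threshold exponents promoted to free parameters and the crucial gain extracted from the tight upper bound \eqref{upper bound}. First I would isolate the three places in \emph{Case 1} where the rate $\e^{1/3}$ is actually lost (Case 2 is harmless: there the energy already exceeds $\Sigma:=\sigma_{12}+\sigma_{13}+\sigma_{23}$ by a fixed positive amount via the strict inequality in \eqref{ene:case2,total}, so any $o(1)$ error suffices). Replacing the closeness threshold $\e^{1/6}$ in the definition of the $\lam_i$ by $\e^{\beta}$, and the slack $\e^{1/3}$ in the definition of $y^*$ by $\e^{\gamma}$, the three error sources become: (i) the one-dimensional defect $C_W\e^{2\beta}$ produced on each slice by Lemma \ref{lemma: 1D energy estimate} (this is the $C\e^{1/3}$ in \eqref{ene:om1,vertical}); (ii) the interface length $\e^{\gamma}$ discarded by the slack in $y^*$ (the $-\e^{1/3}$ terms in \eqref{ene: om2 vertical}); and (iii) the measure $\cl(S)$ of the bad set $S=\{y\le y^*:\lam_3(y)=0\}$, lost because the horizontal estimate \eqref{ene: om2 horizontal} only integrates over $M$. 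In Proposition \ref{prop: lower bound epsilon 1/3} these are balanced by $2\beta=\gamma=1-2\beta-\gamma$, which forces $\beta=\tfrac16,\gamma=\tfrac13$; the binding constraint is the crude bound $\cl(S)\le C\e^{\,1-2\beta-\gamma}$ obtained by testing the bad slices against the \emph{total} energy.

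The new ingredient is to bound $\cl(S)$ against the \emph{excess} energy rather than the total energy. On each slice $y\in S$ there is an $x$-interval of length $\ge \e^{\gamma}$ on which $\min_i|u-a_i|\ge \e^{\beta}$, so by Lemma \ref{lemma: potential energy estimate} the portion $\tfrac{\cos^2\theta_2}{\e}W$ of the potential that is \emph{not} consumed by the vertical estimate contributes an amount $P_S\ge c\,\e^{\,2\beta+\gamma-1}\cl(S)$ that is disjoint from everything already counted in the lower bound. I would then write $\Sigma+C\e\ge J_\e(u_\e)\ge \Sigma-C\e^{2\beta}-C\e^{\gamma}-(\sigma_{13}+\sigma_{23})\cl(S)+P_S$, the left inequality being exactly \eqref{upper bound}. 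Because the Lemma \ref{lemma: 1D energy estimate} error and the slack error on the right do not involve $S$, there is no circularity; rearranging improves the bound on $\cl(S)$ by the factor $\e+\e^{2\beta}+\e^{\gamma}$ over the crude one.

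With this estimate the bad-set contribution $(\sigma_{13}+\sigma_{23})\cl(S)$ is of the same order as (i) and (ii) and no longer degrades the rate, so the total error is governed by $\max(\e^{2\beta},\e^{\gamma})$. Choosing $\beta=\tfrac14$ and $\gamma=\tfrac12$, so that $2\beta=\gamma=\tfrac12$ and $2\beta+\gamma=1$, yields the claimed rate $\e^{1/2}$; the surviving Case 2 already gives $\ge\Sigma-C\e^{1/2}$ for free, so Proposition \ref{prop epsilon 1/2} follows by combining the two cases. I expect the main obstacle to be precisely the borderline $2\beta+\gamma=1$: there the prefactor $c\,\e^{\,2\beta+\gamma-1}=c$ is an honest constant, and to close the estimate one needs the gain $P_S$ to absorb the surface-tension loss $(\sigma_{13}+\sigma_{23})\cl(S)$ in \eqref{ene: om2 horizontal}. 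This forces a genuine comparison between the potential constant $c$ (coming from $c_W$ in Lemma \ref{lemma: potential energy estimate}) and $\sigma_{13}+\sigma_{23}$; if that comparison is unfavorable, the robust fix is to keep $2\beta+\gamma<1$ strictly, where $P_S\gg(\sigma_{13}+\sigma_{23})\cl(S)$ for small $\e$ so the bad set may simply be discarded, and then verify that the resulting rate $\e^{(1-\eta)/2}$ can still be pushed to $\e^{1/2}$ by a more careful accounting of the discarded slices.
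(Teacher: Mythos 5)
Your strategy coincides with the paper's own proof of Proposition \ref{prop epsilon 1/2}: promote the thresholds to $\e^{1/4}$ and the slack in $y^*$ to order $\e^{1/2}$, and recover the bad-slice loss from the portion $\f{\cos^2\theta_2}{\e}W(u)$ of the potential that the vertical estimate never uses; your accounting of the three error sources is also correct. The genuine gap is exactly the borderline comparison you flag yourself and then leave unresolved. With slack precisely $\e^{1/2}$ (i.e.\ $2\beta+\gamma=1$ and no free prefactor), the disjoint gain per unit of $\cl(S)$ is the fixed constant $c\sim \f12 c_W\cos^2\theta_2$, and for a general potential there is no reason this dominates $\sigma_{13}+\sigma_{23}$ (wells far apart make the $\sigma_{ij}$ large while $c_W$ stays bounded), so your main line can fail; and your fallback $2\beta+\gamma<1$ strictly forces $\min(2\beta,\gamma)<\f12$, so the rate degrades below $\e^{1/2}$ and no ``more careful accounting'' of the discarded slices restores it within that scheme. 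The missing idea is one line: make the slack $\al\,\e^{1/2}$ with a \emph{free multiplicative constant} $\al$, as in \eqref{redef: y*}. A bad slice then carries an interval of length $\geq \al\e^{1/2}$ on which $\min_i|u-a_i|\geq \e^{1/4}$, so the gain per unit of $\cl(S)$ becomes $\f12 c_W\al\cos^2\theta_2$ --- linear in $\al$ --- while the enlarged slack costs only an extra $C\al\e^{1/2}$ in the vertical estimate \eqref{refined ene: om2}. Choosing $\al=2C_2$ (in the notation of \eqref{refined ene: total}) wins the constant-versus-constant comparison for \emph{every} admissible $W$, and your inequality chain then closes at rate $\e^{1/2}$ with no case distinction on the size of $c_W$.

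One further caution: you propose to keep the maximization over $\theta_2$ (via $\min E=\Sigma$ plus a Lipschitz perturbation in the $\cl(M)$-slot). This is workable but delicate, because the maximizing $\theta_2$ depends on $(\mu_1,\mu_2,y^*,\beta)$ and drifts toward $\f{\pi}{2}$ when $y^*+\cos\f{\al_3}{2}$ is small, which kills the factor $\cos^2\theta_2$ multiplying your gain $P_S$; you would have to check separately that $\cos\theta_2$ stays bounded below wherever $\beta$ is not already negligible. The paper sidesteps this by \emph{not} maximizing: it substitutes the fixed Young's-law angles $\theta_1=\f{\al_2-\al_1}{2}$ and $\theta_2=\f{\al_3}{2}$, after which the lower bound \eqref{refined ene: total} is linear in $(\mu_1,\mu_2,y^*,\beta)$, the cross terms cancel exactly by \eqref{sum of angles} and \eqref{relation of angles}, and what survives is $\sigma_{12}+\sigma_{13}+\sigma_{23}+2\mu_2\sin\f{\al_2-\al_1}{2}\,\sigma_{12}+C_1(\al-C_2)\beta$ up to $O(\e^{1/2})$, with both correction terms nonnegative once $\al=2C_2$. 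Adopting this fixed-angle substitution would make your argument both airtight and shorter.
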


\begin{proof}
    Firstly, we will refine some definitions of the previously used quantities in the proof of Proposition \ref{prop: lower bound epsilon 1/3}. The definitions of $\zeta(x),\; \mu_1,\;\mu_2,\;\Om_1,\;\Om_2$ remain unchanged; in the definitions of $\lam_i(y),\; K,\;K_1^+,\;K_2^-$, we replace $\e^{\f16}$ by $\e^{\f14}$; and we redefine $y^*$ as
    \beq\label{redef: y*}
   y^*:=\min\{y\in [-\cos{\f{\al_3}{2}}+c_0\e,1]:\lam_1(y)+\lam_2(y)\geq \cl(\g_y)-\al\e^{\f12}\},
    \eeq
    for some constant $\al$ determined later. 

    When $y^*>\cos{\f{\al_2-\al_1}{2}}-c_0\e$, by the proof of Proposition \ref{prop: lower bound epsilon 1/3} (Case 2) the total energy is strictly larger than $\sigma_{12}+\sigma_{13}+\sigma_{23}$ when $\e$ is small enough, so we may assume $y^*\leq \cos{\f{\al_2-\al_1}{2}}-c_0\e$.

    In $\Om_1$, following similar estimates as in \eqref{ene:om1,vertical} and \eqref{ene:om1,horizontal} we get
    \beq\label{refined ene: om1}
    \begin{split}
          &\int_{\Om_1}\left(\f{\e}2|\na u|^2+\f{1}{\e}W(u)\right)\,dxdy\\
        \geq & \sin\theta_1(\mu_1+\mu_2)(\sigma_{12}-C\e^{\f12})+\cos\theta_1\sigma_{12}(\cos{\f{\al_2-\al_1}{2}}-c_0\e-y^*)
    \end{split}
    \eeq
    for any $\theta_1\in[0,\f{\pi}{2}]$.

    In $\Om_2$, thanks to the analogous estimates as in \eqref{ene: om2 vertical} and \eqref{ene: om2 horizontal},
    \beq\label{refined ene: om2}
    \begin{split}
      &\int_{\Om_2}\left(\f{\e}2|\na u|^2+\f{1}{\e}W(u)\right)\,dxdy\\
        \geq &  \int_{x\in K} \int_{-\sqrt{1-x^2}}^{\zeta(x)} \left( \f{\e}2|\pa_y u|^2+\f{\sin^2\theta_2}{\e} W(u) \right)\,dydx+\int_{y\in M} \int_{\g_y}  \left(  \f\e2|\pa_{x}u|^2+\f{\cos^2\theta_2}{\e}W(u) \right)\,dxdy\\
        &\qquad +\int_{y\in [-\cos\f{\al_3}{2}+c_0\e,y^*]\setminus M}\int_{\g_y} \f{\cos^2\theta_2}{\e} W(u)\,dxdy\\
        \geq & \sin\theta_2\left[ (\sin\f{\al_3}{2}-\al \e^{\f12})(\sigma_{13}+\sigma_{23})+(\mu_2-\mu_1+\sin{\f{\al_2-\al_1}{2}})(\sigma_{23}-\sigma_{13}) \right] -O(\e)\\
        &\qquad + (y^*+\cos\f{\al_3}{2}-\beta)\cos\theta_2(\sigma_{13}+\sigma_{23})-C\e^{\f12} +\f12 c_W \al \cos^2\theta_2\beta,  \qquad \text{ for any }\theta_2\in [0,\f{\pi}{2}]. 
    \end{split}
    \eeq
    where $\beta:=y^*+\cos\f{\al_3}{2}-\cl(M)\geq 0$. Substituting $\theta_1=\f{\al_2-\al_1}{2}$ in \eqref{refined ene: om1} and $\theta_2=\f{\al_3}{2}$ in \eqref{refined ene: om2} and adding them up, 
    \begin{equation}\label{refined ene: total}
        \begin{split}
            &\int_{B_1}\left(\f{\e}2|\na u|^2+\f{1}{\e}W(u)\right)\,dxdy\\
            \geq & \left[ \sin\f{\al_2-\al_1}{2}(\mu_1+\mu_2)\sigma_{12}+(\mu_2-\mu_1)\sin\f{\al_3}{2}(\sigma_{23}-\sigma_{13}) \right]\\
            \qquad &+\left[\cos^2\f{\al_2-\al_1}{2}\sigma_{12}+\sin\f{\al_3}{2}(\sigma_{23}-\sigma_{13})\sin\f{\al_2-\al_1}{2}\right]\\
            \qquad &+(\sigma_{13}+\sigma_{23})+C_1(\al-C_2)\beta-C_3(\al+1)\e^{\f12}-C_4\e\\
            =&(\sigma_{12}+\sigma_{13}+\sigma_{23})+2\mu_2\sin\f{\al_2-\al_1}{2}\sigma_{12}+C_1(\al-C_2)\beta-C_3(\al+1)\e^{\f12}-C_4\e,
        \end{split}  
    \end{equation}
    where we have used \eqref{sum of angles} and \eqref{relation of angles} repeatedly. Here all the constants $C_i$ ($i=1,...,4$) only depend on the potential $W$. We can take $\al=2C_2$ in \eqref{refined ene: total} to obtain 
    \beq
    \int_{B_1}\left(\f{\e}2|\na u|^2+\f{1}{\e}W(u)\right)\,dxdy\geq (\sigma_{12}+\sigma_{13}+\sigma_{23})-C\e^{\f12}
    \eeq
    for some constant $C=C(W)$ and sufficiently small $\e$. 
\end{proof}
\begin{rmk}
From the lower bound \eqref{refined ene: total} and the upper bound \eqref{upper bound}, we have 
\beqo
\beta=y^*+\cos\f{\al_3}{2}-\cl(M)\leq C\e^{\f12}.
\eeqo
Then one can examine the minimization problem \eqref{new min problem}  studied in Appendix \ref{app: min prob} to conclude that for the miminizer $u_\e$,
\beq\label{loc of center}
y^*\leq C\e^{\f14}, \quad \text{for some }C=C(W).
\eeq
For a more detailed argument and proof, please refer to \cite[Lemma 4.1]{alikakos2022triple}. 

\end{rmk}

\section{Proof of Theorem \ref{main theorem}}
The remaining proof of Theorem \ref{main theorem} closely follows the proof of \cite[Theorem 1.2]{alikakos2022triple}. Particularly, the results concerning the localization of the diffuse interface (\cite[Proposition 4.2, Proposition 5.1]{alikakos2022triple}), the existence of a sufficient number of triplets of points in close proximity to the three distinct phases (\cite[Lemma 6.3]{alikakos2022triple}), and the analysis of the minimizer's blow-up/blow-down limit (\cite[Section 6]{alikakos2022triple}) all apply to our case with minor modifications.

There is one crucial point that requires our attention. In \cite[Proposition 4.2]{alikakos2022triple}, the proof relies on the fact that the part of the lower bound over $\Omega_1$ in \cite[Proposition 3.2]{alikakos2022triple} is based solely on the horizontal gradient. This allows the vertical gradient to be added to produce an improvement. However, in our case, we already take both the horizontal and vertical gradients into account when estimating the energy in $\Om_1$.  To overcome this issue, we can rotate the coordinate system in $\Om_1$ such that the diffuse interface is parallel to the new $y$-axis, and then calculate the energy in the new coordinate. To be more specific, we examine the proof of Proposition \ref{prop: lower bound epsilon 1/3}. In $\Omega_1$, we can rotate the coordinate through an angle $\f{\al_2-\al_1}{2}$ by setting
\beqo
\tilde{x}=\cos\f{\al_2-\al_1}{2}x+\sin\f{\al_2-\al_1}{2}y,\quad \tilde{y}=-\sin\f{\al_2-\al_1}{2}x+\cos\f{\al_2-\al_1}{2}y.
\eeqo
Then utilizing the boundary condition, we can get exactly the same lower bound in $\Om_1$ as in \eqref{ene:om1} by only integrating the energy with the partial gradient $\left|\f{\pa u}{\pa\tilde{x}}\right|^2$ along the lines parallel to $\tilde{x}$-axis. Then we can argue in the same way as \cite[Proposition 4.2]{alikakos2022triple} to conclude that the diffuse interface has to stay in a $C\e^{\f14}$ neighborhood of the ``triod".

\section*{Competing interests}
The authors declare that they have no conflict of interest.

\section*{Acknowledgement}
Z. Geng is supported by the Basque Government through the BERC 2022--2025 program and by the Spanish State Research Agency through BCAM Severo Ochoa excellence accreditation SEV--2017--0718 and through project PID2020--114189RB--I00 funded by Agencia Estatal de Investigaci\'{o}n (PID2020--114189RB--I00/AEI/10.13039/501100011033).

\section*{Data Availability Statement}

Data sharing is not applicable to this article as no datasets were generated or analysed during the current study.

\appendix

\section{The minimization problem \eqref{min problem}} \label{app: min prob}
Recall that
\begin{equation*}
    \begin{split}
        &E(\mu_1,\mu_2,y^*)\\
        =& \sqrt{\left[\sin{\f{\al_3}{2}}(\sigma_{13}+\sigma_{23})+(\mu_2-\mu_1+\sin{\f{\al_2-\al_1}{2}})(\sigma_{23}-\sigma_{13})\right]^2+\left[(y^*+\cos{\f{\al^3}{2}})(\sigma_{13}+\sigma_{23})\right]^2}\\
    &\quad +\sigma_{12}\sqrt{(\mu_1+\mu_2)^2+(\cos{\f{\al_2-\al_1}{2}}-y^*)^2}.
    \end{split}
\end{equation*}
An easy observation is that $E(\mu_1,\mu_2,y^*)$ reduces if we replace $(\mu_1,\mu_2)$ by $(\mu_1,0)$. Thus we can assume $\mu_2=0$. Set
\beqo
\mu^*:=\sin\f{\al_2-\al_1}{2}-\mu_1. 
\eeqo
By \eqref{relation of angles} we consider the following equivalent minimization problem
\beq\label{new min problem}
\begin{split}
    &\qquad \min{\ \tilde{E}(\mu^*,y^*)} \\
    \text{ such that } &\mu^*\in[-\sin{\f{\al_3}{2}}+2c_0\e,\sin\f{\al_2-\al_1}{2}],\\
    &y_*\in [-\cos{\f{\al_3}{2}}+c_0\e,\cos{\f{\al_2-\al_1}{2}}-c_0\e],
\end{split}
\eeq
where 
\begin{equation*}
    \begin{split}
        &\tilde{E}(\mu^*,y^*)\\
        =& \sqrt{\left[\sin{\f{\al_3}{2}}(\sin\al_1+\sin\al_2)+\mu^*(\sin\al_1-\sin\al_2)\right]^2+\left[(y^*+\cos{\f{\al_3}{2}})(\sin\al_1+\sin\al_2)\right]^2}\\
    &\quad +\sin\al_3\sqrt{(\sin\f{\al_2-\al_1}{2}-\mu^*)^2+(\cos{\f{\al_2-\al_1}{2}}-y^*)^2}.
    \end{split}
\end{equation*}

The objective is to prove that $\tilde{E}$ obtains its minimal value $\sin\al_1+\sin\al_2+\sin\al_3$ if and only if $y^*=\mu^*=0$.

By \eqref{sum of angles}, 
\beqo
\sin{\f{\al_1+\al_2}{2}}=\sin{\f{\al_3}{2}},\quad \cos{\f{\al_1+\al_2}{2}}=-\cos{\f{\al_3}{2}}.
\eeqo

Using this and the Cauchy Schwartz inequality $\sqrt{a^2+b^2}\sqrt{c^2+d^2}\geq ac+bd$, we estimate
\begin{align*}
    &\tilde{E}(\mu^*,y^*)\\
    \geq & \sin^2{\f{\al_3}{2}}(\sin\al_1+\sin\al_2)+\mu^*\sin\f{\al_3}{2}(\sin\al_1-\sin\al_2)+\cos\f{\al_3}{2}(y^*+\cos\f{\al_3}{2})(\sin\al_1+\sin\al_2)\\
    &\quad + \sin\al_3\left( (\sin\f{\al_2-\al_1}{2}-\mu^*)\sin\f{\al_2-\al_1}{2}+(\cos\f{\al_2-\al_1}{2}-y^*)\cos\f{\al_2-\al_1}{2} \right)\\
    =&\sin\al_1+\sin\al_2+\sin\al_3.
\end{align*}
The equality holds if and only if 
\begin{align}
\label{eq cond 1}\f{\sin{\f{\al_3}{2}}(\sin\al_1+\sin\al_2)+\mu^*(\sin\al_1-\sin\al_2)}{(y^*+\cos\f{\al_3}{2})(\sin\al_1+\sin\al_2)}&=\f{\sin\f{\al_3}{2}}{\cos\f{\al_3}{2}},\\
\label{eq cond 2}\f{\sin\f{\al_2-\al_1}{2}-\mu^*}{\cos\f{\al_2-\al_1}{2}-y^*}&=\f{\sin\f{\al_2-\al_1}{2}}{\cos\f{\al_2-\al_1}{2}}.
\end{align}

Suppose $y^*\neq 0$, then by \eqref{eq cond 2},
\beqo
\f{\mu^*}{y^*}=\f{\sin\f{\al_2-\al_1}{2}}{\cos\f{\al_2-\al_1}{2}}.
\eeqo
Substituting this into \eqref{eq cond 1} implies
\begin{align*}
    &\f{\sin\f{\al_2-\al_1}{2}(\sin\al_1-\sin\al_2)}{\cos\f{\al_2-\al_1}{2}(\sin\al_1+\sin\al_2)}=\f{\sin\f{\al_3}{2}}{\cos\f{\al_3}{2}}\\
    \Rightarrow & \tan^2\f{\al_2-\al_1}{2}=\tan^2\f{\al_3}{2},
\end{align*}
which is impossible since $0\leq \f{\al_2-\al_1}{2}<\f{\al_3}{2}<\f{\pi}{2}$. Therefore in order to get the equality it must hold that $y^*=0$. Then by \eqref{eq cond 2} $\mu^*=0$. The proof is completed.

\section{The last inequality in \eqref{ene:case2,total}} \label{app:case2 ineq}
The objective is to show
\begin{equation*}
    \begin{split}
       ( \sigma_{12}+\sigma_{13}+\sigma_{23})^2< &\left[ (\sin{\f{\al_3}{2}}- \sin{\f{\al_2-\al_1}{2}})\sigma_{13} +( \sin{\f{\al_3}{2}}+ \sin{\f{\al_2-\al_1}{2}})\sigma_{23} \right]^2+ \\
        &\qquad\quad +\left[(\sigma_{13}+\sigma_{23})(\cos{\f{\al_3}{2}}+\cos{\f{\al_2-\al_1}{2}})\right]^2
    \end{split}
\end{equation*}
Or by \eqref{relation of angles} equivalently,
\begin{equation*}
    \begin{split}
       ( \sin{\al_1}+\sin{\al_2}+\sin{\al_3})^2< &\left[ (\sin{\f{\al_3}{2}}- \sin{\f{\al_2-\al_1}{2}})\sin{\al_2} +( \sin{\f{\al_3}{2}}+ \sin{\f{\al_2-\al_1}{2}})\sin{\al_1} \right]^2+ \\
        &\qquad\quad +\left[(\sin{\al_2}+\sin{\al_1})(\cos{\f{\al_3}{2}}+\cos{\f{\al_2-\al_1}{2}})\right]^2
    \end{split}
\end{equation*}

We directly compute
\begin{align*}
    &\left[ \sin{\f{\al_3}{2}}(\sin\al_2+\sin\al_1)+\sin{\f{\al_2-\al_1}{2}}(\sin\al_1-\sin\al_2) \right]^2\\
    &\qquad\quad +\left[(\sin\al_1+\sin\al_2)(\cos{\f{\al_3}{2}}+\cos{\f{\al_2-\al_1}{2}})\right]^2-( \sin{\al_1}+\sin{\al_2}+\sin{\al_3})^2\\
    =&4\left(\sin^2\f{\al_3}{2}\cos\f{\al_2-\al_1}{2}+\sin^2\f{\al_2-\al_1}{2}\cos\f{\al^3}{2}\right)^2\\
    &\qquad +4\sin^2\f{\al_3}{2}\cos^2\f{\al_2-\al_1}{2}\left( \cos\f{\al_3}{2} +\cos\f{\al_2-\al_1}{2}\right)^2-4\sin^2\f{\al_3}{2}\left( \cos\f{\al_3}{2} +\cos\f{\al_2-\al_1}{2}\right)^2\\
    =& 4\left(\sin^2\f{\al_3}{2}\cos\f{\al_2-\al_1}{2}+\sin^2\f{\al_2-\al_1}{2}\cos\f{\al^3}{2}\right)^2\\
    &\qquad -4\sin^2\f{\al_3}{2}\sin^2\f{\al_2-\al_1}{2}\left( \cos\f{\al_3}{2} +\cos\f{\al_2-\al_1}{2}\right)^2\\
    =&4\left(\sin^2\f{\al_3}{2}-\sin^2\f{\al_2-\al_1}{2}\right)^2\\
    >&0.
\end{align*}

\bibliographystyle{acm}
\bibliography{bib_triple_junction-unequal}

\end{document}